\newtheorem{theorem}{Theorem}
\newtheorem{lemma}[theorem]{Lemma}
\newtheorem{prop}[theorem]{Proposition}
\theoremstyle{definition}
\newcommand{\M}{\mathcal{M}}
\newcommand{\PM}{\mathcal{PM}}
\newcommand{\bdr}[1]{\partial\! #1}
\numberwithin{equation}{section}
\title[Generating the mapping class group]
{Generating the mapping class group of a
nonorientable surface by crosscap transpositions}
\author{Marta Le\`sniak}
\author{B{\l}a\.zej Szepietowski}
\address{Institute of Mathematics, Faculty of Mathematics, Physics and Informatics, University of Gda\'nsk, 80-308 Gda\'nsk, Poland} 
\email{marta.lesniak@mat.ug.edu.pl}
\email{blaszep@mat.ug.edu.pl}
\thanks{Both authors supported by National Science Centre, Poland, grant 2015/17/B/ST1/03235.}
\begin{document}
\begin{abstract}
A crosscap transposition is an element of the mapping class group of a nonorientable surface represented by a homeomorphism supported on a one-holed Klein bottle and swapping two crosscaps. We prove that the mapping class group of a compact nonorientable surface of genus $g\ge 7$ is generated by conjugates of one crosscap transposition. In the case when the surface is either closed or has one boundary component, we give an explicit set of $g+2$ crosscap transpositions  generating the mapping class group. 
\end{abstract}

\maketitle
\section{Introduction}
We say that a group $G$ is {\it normally generated} by a subset $X$  if $G$ is generated by conjugates of elements of $X$ and their inverses. In other words, $G$ is the normal closure of $X$ in $G$. For example, the symmetric group of a finite set is normally generated by a single transposition.

For a compact surface $F$, its {\it mapping class group} $\M(F)$ is the group of isotopy classes of all, orientation preserving if $F$ is orientable, homeomorphisms $F\to F$ equal to the identity on the boundary of $F$. A compact surface of genus $g$ with $n$ boundary components will be denoted by $S_{g,n}$ if it is orientable, or by $N_{g,n}$ if it is nonorientable.
One of the basic facts about $\M(S_{g,n})$, proved by Dehn \cite{Dehn}, is that it is generated by finitely many Dehn twists; see \cite{F-M} for a background on the mapping class group of an orientable surface. If $g\ge 2$ or $(g,n)\in\{(1,0),(1,1)\}$ then $\M(S_{g,n})$ is generated by Dehn twists about nonseparating curves, and because any two such twists are conjugate, $\M(S_{g,n})$ is normally generated by one twist. This fact, proved by Mumford \cite{Mum} and Lickorish \cite{Lick2}, has an important immediate consequence: the abelianisation of $\M(S_{g,n})$ is cyclic (and in fact trivial for $g\ge 3$; see \cite[Section 5.1]{F-M} for the history of the computation of the abelianisation of $\M(S_{g,n})$). 

\begin{figure}[t]
 \begin{center}
 \input{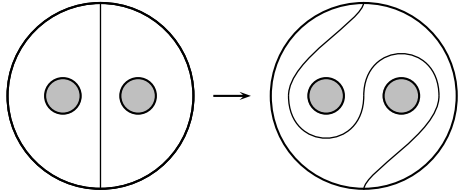}
\caption {A crosscap transposition.}\label{Fig:U}
\end{center}
\end{figure}
The purpose of this paper is to show that also $\M(N_{g,n})$ is normally generated by a single element, namely a crosscap transposition, if $g$ is at least $7$. A crosscap transposition is an element of $\M(N_{g,n})$ represented by a homeomorphism supported on a one-holed Klein bottle and swapping two crosscaps (see Figure \ref{Fig:U}, where the shaded discs represent crosscaps).
\begin{theorem}\label{T:main1}
For $g\ge 7$ and $n\ge 0$ the group $\M(N_{g,n})$ is normally generated by one crosscap transposition.
\end{theorem}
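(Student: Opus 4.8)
\emph{Setup and reduction.} Fix a crosscap transposition $u$ and let $H$ be its normal closure in $\M(N_{g,n})$. By the change of coordinates principle, any one-holed Klein bottle in $N_{g,n}$ can be carried to any other by a homeomorphism of $N_{g,n}$ (their complements are both copies of $N_{g-2,n+1}$), hence all crosscap transpositions are conjugate in $\M(N_{g,n})$, and $H$ equals the subgroup generated by \emph{all} crosscap transpositions. The plan is then to start from a known generating set: by the work of Lickorish, Chillingworth, Szepietowski and Stukow, $\M(N_{g,n})$ is generated by Dehn twists about two-sided nonseparating simple closed curves, one crosscap slide, and --- when $n\ge1$ --- the boundary Dehn twists. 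It therefore suffices to show that each of these generators lies in $H$.

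\emph{Eliminating the crosscap slide.} Let $K\subset N_{g,n}$ be a one-holed Klein bottle, $\mu$ the (essentially unique) two-sided nonseparating curve contained in $K$, $Y$ the crosscap slide supported on $K$, and $u_K$ a crosscap transposition supported on $K$. I would use the relations $u_K^2=T_{\bdr K}$ and $u_K=T_\mu Y$ (either quoted from Szepietowski's work or verified directly by tracking the action on a spanning arc of $K$). The second gives $Y=T_\mu^{-1}u_K$, so $Y\in H$ as soon as $T_\mu\in H$; thus the crosscap slide is subsumed by the Dehn-twist case.

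\emph{Realizing Dehn twists.} This is the heart of the proof. I would first write a single Dehn twist $T_c$ about a two-sided nonseparating curve $c$ as an explicit word in crosscap transpositions, working inside a subsurface of $N_{g,n}$ containing a row of crosscaps $\mu_1,\dots,\mu_k$ with consecutive crosscap transpositions $u_{i,i+1}$; the mechanism is to combine the braid-/chain-type relations among the $u_{i,i+1}$ with the relations $u_{i,i+1}^2=T_{\bdr K_i}$ so that, after the separating twists cancel, only $T_c$ survives. Given one such $T_c\in H$, the change of coordinates principle propagates it to all Dehn twists about curves of the same topological type; the finitely many remaining types of two-sided nonseparating curves (distinguished by the orientability type of the complement and, if $n\ge1$, by how the curve meets the boundary) are obtained by locating a representative of each inside a subsurface on which the same relation applies. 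Finally, for $n\ge1$, each boundary twist $T_{\partial_i}$ is expressed through a chain relation inside an orientable subsurface $S_{1,1}$ (or $S_{2,1}$) having $\partial_i$ as its boundary, which embeds in $N_{g,n}$ since $g\ge3$. The hypothesis $g\ge7$ is used precisely to ensure that all of these auxiliary subsurfaces embed with nonseparating boundary and that every relevant curve type occurs inside one of them.

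\emph{Conclusion.} Putting the three steps together, $H$ contains a full generating set of $\M(N_{g,n})$, so $H=\M(N_{g,n})$; being the normal closure of a single crosscap transposition, this is exactly the statement of Theorem~\ref{T:main1}. I expect the genuine difficulty to lie in the third step --- pinning down the precise word in crosscap transpositions (and the correct ambient subsurface) whose product is one Dehn twist, and checking that $g\ge7$ leaves enough room for all the curve-type bookkeeping --- while the reduction to a standard generating set and the treatment of the crosscap slide should be comparatively routine.
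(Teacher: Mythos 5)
Your overall architecture matches the paper's: reduce to a known generating set, absorb the crosscap slide via $U_{\mu,\alpha}=Y_{\mu,\alpha}T_{\mu,\alpha}$, and then show that one Dehn twist about a nonseparating two-sided curve lies in the normal closure $H$ of a crosscap transposition. (The paper packages the reduction as Proposition~\ref{P:au}, using Atalan--Szepietowski to generate by $u_1$ together with twists about nonseparating curves with nonorientable complement, and Labru\`ere--Paris to dispose of the boundary twists.) One incidental error: it is not true that all crosscap transpositions are conjugate. As the paper notes, $U_{\mu_1,\alpha_1}$ and $U_{\mu_2,\alpha_2}$ are conjugate iff the complements of $\mu_1\cup\alpha_1$ and $\mu_2\cup\alpha_2$ are both orientable or both nonorientable, and for even $g$ both types occur; your ``change of coordinates'' justification therefore fails, though the argument survives if you work only with the nonorientable-complement type (which is what $u_1$ is for $g\ge5$).

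The genuine gap is the step you yourself flag as the heart of the matter: you never produce the word expressing a nonseparating Dehn twist as a product of conjugates of crosscap transpositions, and the mechanism you propose is the wrong one. Braid-type relations such as $u_iu_{i+1}u_i=u_{i+1}u_iu_{i+1}$ are consequences of the conjugation formula \eqref{eq:fUf} and create no new elements of $H$, while $u_i^2=Y_i^2$ is a twist about the \emph{separating} boundary of the one-holed Klein bottle; there is no visible way to make ``the separating twists cancel'' and leave a nonseparating twist. The device the paper actually uses is the lantern relation: with $\alpha_1,\alpha_3,\alpha_5,\varepsilon$ bounding a four-holed sphere one has $a_1a_3a_5e=bcd$, hence $a_1=(ba_3^{-1})(ca_5^{-1})(de^{-1})$, and each factor is of the form $fa_3f^{-1}a_3^{-1}$ or $dfd^{-1}f^{-1}$ with $f$ a product of crosscap transpositions; such an expression equals $f\cdot(a_3f^{-1}a_3^{-1})$, a product of crosscap transpositions and their $a_3$-conjugates, all lying in $H$. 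This is also exactly where $g\ge7$ enters (the curves $\gamma_{\{1,2,3,4,5,7\}}$ and $\gamma_{\{1,2,6,7\}}$ used to reach $\varepsilon$ and $\delta$ require seven crosscaps), whereas your sketch locates the genus hypothesis only in vague ``room for bookkeeping.'' Without the lantern relation (or an equivalent trick), the proposal does not close.
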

Since a conjugate of a crosscap transposition is also a crosscap transposition, $\M(N_{g,n})$ is generated by  crosscap transpositions for $g\ge 7$. In Theorem \ref{T:main2} we give an explicit set of $g+2$ crosscap transpositions generating $\M(N_{g,n})$ for $n\le 1$. The assumption $g\ge 7$  is necessary, as for $2\le g\le 6$ the abelianisation of $\M(N_{g,n})$ is not cyclic \cite{KorkH1,Stu_bdr}, and hence $\M(N_{g,n})$ can not be normally generated by one element.

Lickorish \cite{Lick1} proved that $\M(N_{g,0})$ is not generated by Dehn twists for $g\ge 2$, but it is generated by Dehn twists and Y-homeomorphisms, also called crosscap slides. The subgroup of $\M(N_{g,n})$ generated by all Dehn twists has index $2$ \cite{Lick3,Stu_twist}. Since crosscap slides induce the identity automorphism of $H_1(N_{g,n},\mathbb{Z}_2)$, they also generate a proper subgroup of 
 $\M(N_{g,n})$ (see \cite{Szep_GD}). 
 
Finite generating sets for $\M(N_{g,n})$  were found in \cite{Chill,KGD,Stu_twist,Stu_bdr} for $g\ge 3$ and arbitrary $n$, all consisting of Dehn twists and one crosscap slide. Because a crosscap transposition is the product of a crosscap slide and a Dehn twist, crosscap transpositions can be used instead of crosscap slides. The presentations of $\M(N_{g,n})$ given in \cite{PSz, Szep_Osaka, Szep_g4} use Dehn twists and crosscap transpositions as generators. Building on \cite{PSz}, Stukow \cite{Stu_Pres} found a finite presentation of $\M(N_{g,0})$ and $\M(N_{g,1})$ with Dehn twists and one crosscap slide as generators.   Crosscap transpositions appear already in the proof of the main  result of \cite{Chill}, and their name was introduced in \cite{Szep_g4}. Recently Parlak and Stukow \cite{Parlak} proved that in $\M(N_{g,0})$ crosscap slides have nontrivial roots if $g\ge 5$.

This paper is motivated by the desire to have a symmetric presentation of $\M(N_{g,n})$ in which all generators would have the same topological type (that is they would belong to the same conjugacy class). A good example of what we understand by a symmetric presentation is the finite presentation of $\M(S_{g,n})$ given by Gervais \cite{Gerv}, with many Dehn twists as generators but very simple relations.    We believe that the present paper is a step towards a similar presentation in the case of a nonorientable surface.

\section{Preliminaries}
Let $N=N_{g,n}$ be a nonoriantable surface of genus $g\ge 2$. We represent $N$ as a 2-sphere with $n$ holes and $g$ crosscaps (Figure \ref{Fig:Ngn}). In all figures of this paper shaded discs represent crosscaps. This means that the interior of each shaded disc should be removed, and then antipodal points on the resulting boundary component should be identified. 

By a {\it curve} in $N$ we understand in this paper an oriented  simple closed curve. We do not distinguish a curve from its isotopy class. If $\gamma$ is a curve, then $\gamma^{-1}$ is the curve with the same image as $\gamma$ but opposite orientation. According to whether a regular neighbourhood of $\gamma$ is an annulus or a M\"obius strip, we call $\gamma$ respectively two- or one-sided.

Suppose that $\mu$ and $\alpha$ are curves in $N$ such that $\mu$ is one-sided, $\alpha$ is two-sided and they intersect in one point. Let $K\subset N$ be a regular neighbourhood of $\mu\cup\alpha$, which is homeomorphic to the Klein bottle with a hole.
With the pair $\mu, \alpha$ we associate three elements of the mapping class group $\M(N)$: Dehn twists about $\alpha$, crosscap slide and crosscap transposition. In order to define a Dehn twist about $\alpha$ we need to choose an orientation of a regular neighbourhood of $\alpha$, and for that we use the second curve $\mu$. The orientations of $\mu$ and $\alpha$ determine a local orientation at their intersection point, which can be extended over a regular neighbourhood of $\alpha$.  We denote by  $T_{\mu,\alpha}$ the Dehn twist about $\alpha$ in the direction determined by $\mu$ as indicated by the small arrows on the left hand side of Figure \ref{Fig:YT}. By this convention we have
\[T_{\mu,\alpha}=T_{\mu^{-1},\alpha^{-1}}=T^{-1}_{\mu^{-1},\alpha}=T^{-1}_{\mu,\alpha^{-1}}.\]
\begin{figure}[t]
 \begin{center}
 \input{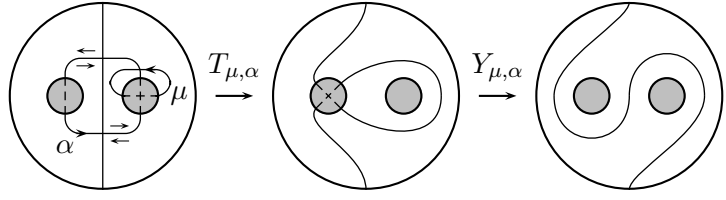}
\caption {The crosscap transposition $U_{\mu,\alpha}$ as the composition of the Dehn twist $T_{\mu,\alpha}$ and the crosscap slide $Y_{\mu,\alpha}$.}\label{Fig:YT}
\end{center}
\end{figure}
 If an orientation  of a regular neighbourhood of $\alpha$ is fixed, then we will usually use the standard notation $T_\alpha$ for a Dehn twist about  $\alpha$.  The {\it crosscap slide} $Y_{\mu,\alpha}$ is obtained by pushing a regular neighbourhood  of $\mu$ once along $\alpha$ keeping the boundary of $K$ fixed. Finally we define {\it crosscap transposition} to be the composition 
\[U_{\mu,\alpha}=Y_{\mu,\alpha}T_{\mu,\alpha}.\]
On Figure \ref{Fig:YT} we show the effect of $U_{\mu,\alpha}$ on an arc connecting two points of $\bdr{K}$ and separating two crosscaps.
Directly from the definitions, for $f\in\M(N)$ we have
\begin{align}
\label{eq:fTf}&fT_{\mu,\alpha}f^{-1}=T_{f(\mu),f(\alpha)}\\
\label{eq:fYf}&fY_{\mu,\alpha}f^{-1}=Y_{f(\mu),f(\alpha)}\\
\label{eq:fUf}&fU_{\mu,\alpha}f^{-1}=U_{f(\mu),f(\alpha)}.
\end{align}
Note that $Y_{\mu,\alpha}$ reverses the orientation of $\mu$ and preserves the orientation of $\alpha$. It  follows that $Y_{\mu,\alpha}$ reverses the orientation of a regular neighbourhood of $\alpha$, and by \eqref{eq:fTf} we have
\[Y_{\mu,\alpha}T_{\mu,\alpha}Y_{\mu,\alpha}^{-1}=T_{\mu^{-1},\alpha}=T^{-1}_{\mu,\alpha},\]
and also
\[U_{\mu,\alpha}T_{\mu,\alpha}U_{\mu,\alpha}^{-1}=T^{-1}_{\mu,\alpha}.\]
The inverse of $U_{\mu,\alpha}$ is $U_{\mu^{-1},\alpha^{-1}}$. Indeed, we have
\[U_{\mu^{-1},\alpha^{-1}}=Y_{\mu^{-1},\alpha^{-1}}T_{\mu^{-1},\alpha^{-1}}
=Y^{-1}_{\mu,\alpha}T_{\mu,\alpha}=T^{-1}_{\mu,\alpha}Y^{-1}_{\mu,\alpha}=U^{-1}_{\mu,\alpha}.\]
It follows from \eqref{eq:fUf} and the classification of surfaces that two crosscap transpositions $U_{\mu_1,\alpha_1}$ and $U_{\mu_2,\alpha_2}$ are conjugate in $\M(N)$ if and only if the complements of $\mu_1\cup\alpha_1$ and $\mu_2\cup\alpha_2$ in $N$ are either both nonorientable or both orientable.
\begin{figure}[t]
 \begin{center}
 \input{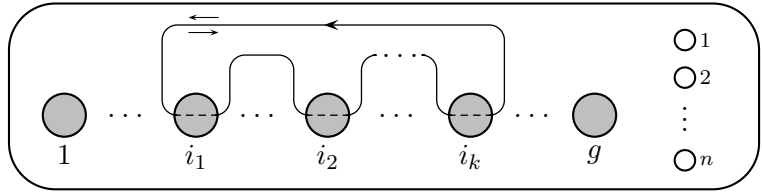}
\caption {The surface $N_{g,n}$ and the curve $\gamma_I$ for $I=\{i_1,i_2,\dots,i_k\}$.}\label{Fig:Ngn}
\end{center}
\end{figure}

For a non-empty subset $I=\{i_1,i_2,\dots,i_k\}\subseteq\{1,2,\dots,g\}$ let $\gamma_I$ denote the curve shown on Figure \ref{Fig:Ngn}. Note that $\gamma_I$ is two-sided if and only if $k$ is even. In such a case we denote by $T_{\gamma_I}$ the Dehn twists about $\gamma_I$ in the direction indicated by the small arrows on Figure \ref{Fig:Ngn} (that is $T_{\gamma_{\{i_k\}},\gamma_I}$ according to our convention).  The following curves will play a special role and so we give them different names:
\begin{itemize}
\item $\mu_{i}=\gamma_{\{i\}}$ for $i=1,2,\dots,g$,
\item $\alpha_{i}=\gamma_{\{i,i+1\}}$ for $i=1,2,\dots,g-1$,
\item $\beta=\gamma_{\{1,2,3,4\}}$.
\end{itemize}
We also define  Dehn twists
\begin{itemize}
\item $a_i=T_{\alpha_i}$ for $i=1,2,\dots,g-1$,
\item $b=T_{\beta}$,
\end{itemize}
and crosscap transpositions
\begin{itemize}
\item $u_i=U_{\mu_{i+1},\alpha_i}$ for $i=1,2,\dots,g-1$,
\item $v=U_{\mu_4,\beta}$.
\end{itemize}
Observe that $u_i$ swaps M\"obius bands with cores $\mu_i$ and $\mu_{i+1}$, while $v$ swaps M\"obius bands with cores $\mu_4$ and $\gamma_{\{1,2,3\}}$.
The following relations are consequences of \eqref{eq:fTf} and \eqref{eq:fUf}.
\begin{align}
\label{eq:uuu}& u_iu_{i+1}u_i=u_{i+1}u_iu_{i+1},\\
\label{eq:uua}& u_iu_{i+1}a_i=a_{i+1}u_iu_{i+1},
\end{align}
for $i=1,\dots,g-2$, and
\begin{align}
\label{eq:uvu}& u_4vu_4=vu_4v,\\
\label{eq:uva}& u_4va_4=bu_4v.
\end{align}
Indeed,  \eqref{eq:uuu} and \eqref{eq:uua} follow from the fact that $u_iu_{i+1}$ takes $\alpha_i$ to $\alpha_{i+1}$ and $\mu_{i+1}$ to $\mu_{i+2}$. Similarly, \eqref{eq:uvu} and \eqref{eq:uva} follow from the fact that $u_4v$ takes $\alpha_4$ to $\beta$ and $\mu_{5}$ to $\mu_{4}$.

\section{Proofs}
\begin{prop}\label{P:au}
For $g\ge 5$ and $n\ge 0$, $\M(N_{g,n})$ is normally generated by $\{a_1,u_1\}$.
\end{prop}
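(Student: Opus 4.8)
Let $\mathcal{N}$ denote the normal closure of $\{a_1,u_1\}$ in $\M(N_{g,n})$. The plan is to show that $\mathcal{N}$ contains one of the known finite generating sets of $\M(N_{g,n})$, and therefore equals the whole group. The two tools are the conjugacy criterion for crosscap transpositions recalled above (a consequence of \eqref{eq:fUf} and the classification of surfaces) and the braid-type relations \eqref{eq:uua} and \eqref{eq:uva}.

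\emph{First} I would put all crosscap transpositions with nonorientable complement into $\mathcal{N}$. Since $\mathcal{N}$ is normal and contains $u_1=U_{\mu_2,\alpha_1}$, by \eqref{eq:fUf} it contains $U_{f(\mu_2),f(\alpha_1)}$ for every $f\in\M(N_{g,n})$. A regular neighbourhood of $\mu_2\cup\alpha_1$ is a one-holed Klein bottle, and a short count of Euler characteristics and boundary components shows its complement to be $N_{g-2,n+1}$, which is nonorientable because $g\ge 5$. By the conjugacy criterion this forces $U_{\mu,\alpha}\in\mathcal{N}$ whenever the complement of $\mu\cup\alpha$ is nonorientable; in particular $u_1,\dots,u_{g-1}\in\mathcal{N}$ and $v=U_{\mu_4,\beta}\in\mathcal{N}$, since in each case the relevant complement is again $N_{g-2,n+1}$. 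In the same way $\mathcal{N}$ contains the crosscap slide $y_1=Y_{\mu_2,\alpha_1}=u_1a_1^{-1}$ and every crosscap slide with nonorientable complement.

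\emph{Next} I would put in the Dehn twists. Rewriting \eqref{eq:uua} as $a_{i+1}=(u_iu_{i+1})a_i(u_iu_{i+1})^{-1}$ and using $a_1\in\mathcal{N}$ together with $u_1,\dots,u_{g-1}\in\mathcal{N}$, an induction on $i$ gives $a_1,\dots,a_{g-1}\in\mathcal{N}$; then \eqref{eq:uva}, rewritten as $b=(u_4v)a_4(u_4v)^{-1}$, gives $b\in\mathcal{N}$. (Alternatively $a_2,\dots,a_{g-1}$ are conjugate to $a_1$ directly, as the curves $\alpha_i=\gamma_{\{i,i+1\}}$ all have the same topological type, but $\beta=\gamma_{\{1,2,3,4\}}$ does not, so \eqref{eq:uva} is genuinely needed for $b$.) Thus $\{a_1,\dots,a_{g-1},b,u_1\}\subset\mathcal{N}$, equivalently $\{a_1,\dots,a_{g-1},b,y_1\}\subset\mathcal{N}$. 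For $n\le 1$ this set is, up to replacing the crosscap slide by a crosscap transposition, one of the standard generating sets of $\M(N_{g,n})$ from the references in the Introduction, so $\mathcal{N}=\M(N_{g,n})$ and we are done.

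The remaining case $n\ge 2$ requires the extra, boundary-related generators of $\M(N_{g,n})$ — the Dehn twists $T_\delta$ about curves $\delta$ parallel to the boundary components — to be placed in $\mathcal{N}$ as well. I would obtain these from a lantern relation supported on a four-holed sphere in $N_{g,n}$ having $\delta$ as one of its boundary curves and all of its other boundary and interior curves of the same topological type as $\alpha_1$ (together with the relation $Y_{\mu,\alpha}^2=T_{\partial K}$ for a one-holed Klein bottle $K$, if needed): since every Dehn twist about a curve of type $\alpha_1$ is conjugate to $a_1$ and every crosscap slide with nonorientable complement lies in $\mathcal{N}$, such relations express $T_\delta$ as a product of elements of $\mathcal{N}$. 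I expect this last point — fixing the precise generating set for $n\ge 2$ and realising each additional generator inside $\mathcal{N}$ — to be the only real obstacle, the earlier steps being routine given the relations set up in Section~2.
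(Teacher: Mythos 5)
Your strategy is genuinely different from the paper's. The paper does not try to place a known generating set inside the normal closure directly; instead it caps the boundary components and uses the surjection $\M(N_{g,n})\to\PM^+(\widehat N)$, quoting a result of Atalan and Szepietowski that $\PM^+(\widehat N)$ is generated by $u_1$ together with Dehn twists about nonseparating curves with nonorientable complement (all conjugate to $a_1^{\pm1}$), so that only the boundary twists $d_1,\dots,d_n$ remain to be treated. Your first two steps are correct and self-contained: the conjugacy criterion does put every crosscap transposition with nonorientable complement (in particular all $u_i$ and $v$) into the normal closure $\mathcal{N}$, and \eqref{eq:uua}, \eqref{eq:uva} then give $a_1,\dots,a_{g-1},b\in\mathcal{N}$; combined with the generating set of \cite{PSz} this settles the cases $n\in\{0,1\}$. (A small aside: for $g\ge 5$ the curve $\beta$ is in fact in the same $\M(N)$-orbit as $\alpha_1$ --- both are two-sided, nonseparating, with nonorientable complement --- so $b$ is conjugate to $a_1^{\pm1}$ in any case; your derivation via \eqref{eq:uva} is of course also fine.)

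The genuine gap is the case $n\ge 2$, which you acknowledge but do not close, and it is not a formality: for $n\ge 2$ the published generating sets contain, besides twists about curves of the type of $\alpha_1$ and one crosscap slide, additional twists (in particular about boundary-parallel curves), and each of these must actually be exhibited in $\mathcal{N}$. Your proposed lantern relation, with one boundary curve of the four-holed sphere equal to a boundary-parallel curve $\delta$ and the remaining six curves nonseparating with nonorientable complement, can be made to work for $g\ge 5$, but you would have to produce the configuration and check the orientations of the seven twists, which you have not done. The paper's solution is shorter and avoids choosing a generating set for general $n$ altogether: it embeds an orientable subsurface $S$ of genus $2$ with $\bdr{N}\subset\bdr{S}$, invokes Labru\`ere--Paris \cite{LabPar} to write $\M(S)$, hence each $d_i$, as a product of twists about nonseparating curves of $S$, and observes that each such curve has nonorientable complement in $N$ (this is where $g\ge 5$ enters), so the corresponding twist is conjugate to $a_1^{\pm1}$. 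Until you supply an argument of this kind, your proof is complete only for $n\le 1$.
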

\begin{proof}
Set $N=N_{g,n}$ and let $\widehat{N}$ be the surface obtained by gluing a once-punctured disc to each boundary component of $N$. Let $\PM^+(\widehat{N})$ denote the group of isotopy classes of homeomorphisms of $\widehat{N}$ fixing each puncture and preserving local orientation at each puncture.  Let $d_1,\dots d_n\in\M(N)$ be  Dehn twists about curves parallel to the boundary components of $N$.
 The inclusion of $N$ in $\widehat{N}$ induces a surjective  homomorphism $\M(N)\to\PM^+(\widehat{N})$ whose kernel is generated by $d_1,\dots,d_n$.
 By \cite[Proposition 2.10]{Ata_Szep} $\PM^+(\widehat{N})$ is generated by $u_1$ and Dehn twists about nonseparating curves with nonorientable complement. It follows that $\M(N)$ is normally generated by $\{u_1, a_1, d_1,\dots,d_n\}$. It suffices to show that $d_i$ can be expressed as a product of conjugates of $a_1$ for $i=1,\dots,n$. Let $S$ be an embedded orientable subsurface of $N$ of genus $2$ and such that $\bdr{N}\subset\bdr{S}$. By \cite{LabPar} $\M(S)$ is generated by Dehn twists about nonseparating curves. It follows that each $d_i$ is a product of Dehn twists about nonseparting curves in $S$. Because the complement in $N$ of such a curve is  nonorientable, the associated Dehn twist is conjugate to $a_1$. 
\end{proof}
\begin{lemma}\label{L:a1}
For $g\ge 7$ and $n\ge 0$ the twist $a_1$ is in the subgroup of $\M(N_{g,n})$ generated by the following set of crosscap transpositions 
\[\{u_i, v, a_3u_2a_3^{-1}, a_3u_3a_3^{-1}\,\mid\,1\le i\le 6\}.\]
\end{lemma}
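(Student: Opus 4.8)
The plan is to reduce the claim to showing that the single twist $a_3$ lies in the subgroup
\[H=\langle u_1,\dots,u_6,\ v,\ a_3u_2a_3^{-1},\ a_3u_3a_3^{-1}\rangle,\]
to observe that $a_3^{2}\in H$ almost immediately, and then to upgrade this to $a_3\in H$ using the ``twisted'' crosscap transposition $a_3u_2a_3^{-1}$. For the reduction, note that since $g\ge 7$ relation \eqref{eq:uua} applies with $i=1$ and $i=2$, so $a_2=(u_1u_2)a_1(u_1u_2)^{-1}$ and $a_3=(u_2u_3)a_2(u_2u_3)^{-1}$, whence
\[a_1=(u_1u_2)^{-1}(u_2u_3)^{-1}\,a_3\,(u_2u_3)(u_1u_2).\]
As $u_1,u_2,u_3\in H$, this shows $a_1\in H$ as soon as $a_3\in H$. (More generally, \eqref{eq:uua} and \eqref{eq:uva} show that $a_1,\dots,a_6$ and $b$ are pairwise conjugate by elements of $\langle u_1,\dots,u_6,v\rangle\subseteq H$, so any of them could serve as the target instead.)

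For the first, easy step, observe that $\alpha_3=\gamma_{\{3,4\}}$ has largest index $4$, so by our conventions $a_3=T_{\mu_4,\alpha_3}$ and $u_3=U_{\mu_4,\alpha_3}$; hence the relation $U_{\mu,\alpha}T_{\mu,\alpha}U_{\mu,\alpha}^{-1}=T_{\mu,\alpha}^{-1}$ becomes $u_3a_3u_3^{-1}=a_3^{-1}$, and therefore $u_3a_3^{-1}u_3^{-1}=a_3$. Consequently
\[a_3u_3a_3^{-1}=a_3\bigl(u_3a_3^{-1}u_3^{-1}\bigr)u_3=a_3^{2}u_3,\]
and since $a_3u_3a_3^{-1},\,u_3\in H$ we conclude $a_3^{2}\in H$. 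Conjugating by the appropriate products of the $u_i$ (and by $u_4v$), we likewise get $a_i^{2}\in H$ for $i=1,\dots,6$ and $b^{2}\in H$.

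It remains to prove $a_3\in H$, and here the generator $a_3u_2a_3^{-1}$ is essential: it is the only one in which $a_3$ occurs to the first power next to a curve that $a_3$ actually moves (the curve $\alpha_3$ meets both $\alpha_2$ and $\mu_3$). By \eqref{eq:fUf} one has $a_3u_2a_3^{-1}=U_{T_{\alpha_3}(\mu_3),\,T_{\alpha_3}(\alpha_2)}$; the plan is to identify the curves $T_{\alpha_3}(\mu_3)$ and $T_{\alpha_3}(\alpha_2)$ explicitly with the help of Figure \ref{Fig:Ngn}, and then, using \eqref{eq:fTf}--\eqref{eq:fUf}, the braid relations \eqref{eq:uuu}, \eqref{eq:uvu} and the conjugacy relations \eqref{eq:uua}, \eqref{eq:uva}, to rewrite $a_3u_2a_3^{-1}$ in the form $w\,a_3\,w'$ with $w,w'$ words in $u_1,\dots,u_6,v$ and the squares $a_j^{\pm2}$, $b^{\pm2}$ already known to lie in $H$; the auxiliary transpositions $u_4,u_5,u_6,v$ should enter precisely here, to ``untwist'' $T_{\alpha_3}(\mu_3)$ and $T_{\alpha_3}(\alpha_2)$ back to standard position. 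Solving for $a_3$ then finishes the argument. I expect this last rewriting to be the genuine obstacle: it requires careful bookkeeping of curves and of orientations, and above all a parity check --- one must make sure that the procedure yields $a_3$ itself rather than merely $a_3^{2}$ (which is already in hand) or a commutator of Dehn twists, i.e.\ that the ``odd part'' of $a_3$ is genuinely captured.
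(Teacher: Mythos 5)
Your reduction of the claim to showing $a_3\in H$ is valid, and the identity $a_3u_3a_3^{-1}=a_3^{2}u_3$ (hence $a_3^{2}\in H$) is correct and is a genuinely nice observation. However, the argument stops exactly where the content of the lemma begins. The final step --- upgrading $a_3^{2}\in H$ to $a_3\in H$ by rewriting $a_3u_2a_3^{-1}=U_{T_{\alpha_3}(\mu_3),\,T_{\alpha_3}(\alpha_2)}$ in the form $wa_3w'$ --- is only announced, and you yourself flag it as ``the genuine obstacle.'' There is a concrete reason to doubt that the tools you list can deliver it: the relations \eqref{eq:uuu}--\eqref{eq:uva} and \eqref{eq:fTf}--\eqref{eq:fUf} are all of change-of-coordinates type and carry no homological information, yet for $g=5,6$ --- where relations of exactly this type among the available $u_i$, $v$ still hold --- a Dehn twist about a nonseparating curve with nonorientable complement is \emph{not} in the subgroup generated by crosscap transpositions, since its class in the abelianisation $\mathbb{Z}_2\oplus\mathbb{Z}_2$ of $\M(N_{g,n})$ differs from that of every crosscap transposition. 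Any correct argument must therefore input a relation that kills the twist class in homology, and your plan contains none; your own parity worry is precisely this issue.

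The paper supplies the missing ingredient with the lantern relation: the curves $\alpha_1,\alpha_3,\alpha_5,\varepsilon$ bound a four-holed sphere, giving $a_1a_3a_5e=bcd$ and hence $a_1=(ba_3^{-1})(ca_5^{-1})(de^{-1})$. Each factor is, up to conjugation by elements of $\langle u_1,\dots,u_6,v\rangle$, of the form $(fa_3f^{-1})a_3^{-1}=f\cdot(a_3f^{-1}a_3^{-1})$ with $f$ a word in $u_1,\dots,u_6,v$, and such an element lies in $H$ once one knows $a_3u_ia_3^{-1}\in H$ for all $i\le 6$ and $a_3va_3^{-1}\in H$ --- a preliminary step that itself requires an argument for $i=1,4,5,6$ and for $v$ (the paper uses commutation with $a_3$ or $a_2$ together with $a_2=(u_2u_3)^{-1}a_3(u_2u_3)$), not just the two conjugates you were handed. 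The lantern relation is exactly what expresses a single (odd) twist as a product of three ``differences'' of conjugate twists, capturing the odd part that your squares and commutators cannot reach; the hypothesis $g\ge 7$ enters to make all seven lantern curves have nonorientable complement, so that all seven twists are conjugate to $a_3$ by words in the $u_i$ and $v$. Without this relation, or an equivalent one, your proof does not close.
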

\begin{proof} Let $G$ be the subgroup of $\M(N_{g,n})$ generated by the set given in the lemma. We begin by showing that $a_3u_ia_3^{-1}\in G$ for $1\le i\le 6$. This follows from the definition of $G$ for $i=2,3$ and from the fact that $a_3$ commutes with $u_i$ for $i=1,5,6$. To prove $a_3u_4a_3^{-1}\in G$ we use the fact that $u_4$ commutes with $a_2$ and $a_2=(u_2u_3)^{-1}a_3(u_2u_3)$ by \eqref{eq:uua}. We have
\begin{align*}
&u_4=a_2u_4a_2^{-1}=(u_2u_3)^{-1}a_3(u_2u_3)u_4(u_2u_3)^{-1}a_3^{-1}(u_2u_3),\\
&a_3u_4a_3^{-1}=a_3(u_2u_3)^{-1}a_3^{-1}(u_2u_3)u_4(u_2u_3)^{-1}a_3(u_2u_3)a_3^{-1}\in G.
\end{align*}
Analogously we obtain $a_3va_3^{-1}\in G$ from the fact that $v$ also commutes with $a_2$ (the curves $\beta$ and $\mu_4$ used to define $v$ are disjoint from $\alpha_2$).
\begin{figure}[t]
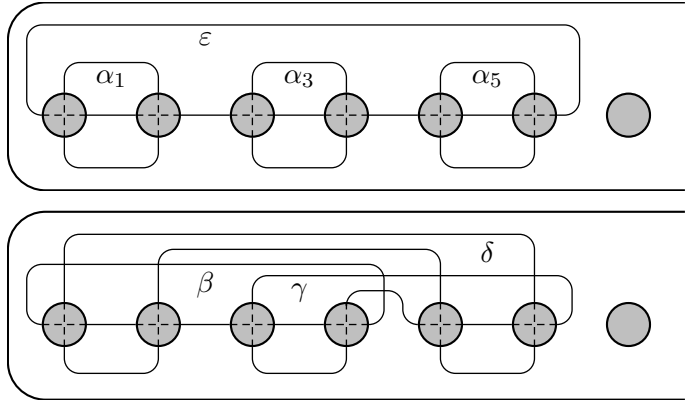

 \begin{center}
 \pspicture*(10,2.5)
%
%
\psline[linearc=.5](9,2.5)(0,2.5)(0,0)(9,0)
\pscircle*[linecolor=lightgray](.75,1){.3}
\pscircle(.75,1){.3}
\psline[linewidth=.5pt,linestyle=dashed,dash=3pt 2pt](.45,1)(1.05,1)
\psline[linewidth=.5pt,linestyle=dashed,dash=3pt 2pt](.75,.7)(.75,1.3)
\psline[linewidth=.5pt](1.05,1)(1.7,1)
\pscircle*[linecolor=lightgray](2,1){.3}
\pscircle(2,1){.3}
\psline[linewidth=.5pt,linestyle=dashed,dash=3pt 2pt](1.7,1)(2.3,1)
\psline[linewidth=.5pt,linestyle=dashed,dash=3pt 2pt](2,.7)(2,1.3)
\psline[linewidth=.5pt,linearc=.2](.75,1.3)(.75,1.7)(2,1.7)(2,1.3)
\psline[linewidth=.5pt,linearc=.2](.75,.7)(.75,.3)(2,.3)(2,.7)
\rput[t](1.375,1.6){\small$\alpha_1$}
\psline[linewidth=.5pt](2.3,1)(2.95,1)
\pscircle*[linecolor=lightgray](3.25,1){.3}
\pscircle(3.25,1){.3}
\psline[linewidth=.5pt,linestyle=dashed,dash=3pt 2pt](2.95,1)(3.55,1)
\psline[linewidth=.5pt,linestyle=dashed,dash=3pt 2pt](3.25,.7)(3.25,1.3)
\psline[linewidth=.5pt](3.55,1)(4.2,1)
\pscircle*[linecolor=lightgray](4.5,1){.3}
\pscircle(4.5,1){.3}
\psline[linewidth=.5pt,linestyle=dashed,dash=3pt 2pt](4.2,1)(4.8,1)
\psline[linewidth=.5pt,linestyle=dashed,dash=3pt 2pt](4.5,.7)(4.5,1.3)
\psline[linewidth=.5pt,linearc=.2](3.25,1.3)(3.25,1.7)(4.5,1.7)(4.5,1.3)
\psline[linewidth=.5pt,linearc=.2](3.25,.7)(3.25,.3)(4.5,.3)(4.5,.7)
\rput[t](3.875,1.6){\small$\alpha_3$}
\psline[linewidth=.5pt](4.8,1)(5.45,1)

\pscircle*[linecolor=lightgray](5.75,1){.3}
\pscircle(5.75,1){.3}
\psline[linewidth=.5pt,linestyle=dashed,dash=3pt 2pt](5.45,1)(6.05,1)
\psline[linewidth=.5pt,linestyle=dashed,dash=3pt 2pt](5.75,.7)(5.75,1.3)
\psline[linewidth=.5pt](6.05,1)(6.7,1)
\pscircle*[linecolor=lightgray](7,1){.3}
\pscircle(7,1){.3}
\psline[linewidth=.5pt,linestyle=dashed,dash=3pt 2pt](6.7,1)(7.3,1)
\psline[linewidth=.5pt,linestyle=dashed,dash=3pt 2pt](7,.7)(7,1.3)
\psline[linewidth=.5pt,linearc=.2](5.75,1.3)(5.75,1.7)(7,1.7)(7,1.3)
\psline[linewidth=.5pt,linearc=.2](5.75,.7)(5.75,.3)(7,.3)(7,.7)
\rput[t](6.375,1.6){\small$\alpha_5$}
\pscircle*[linecolor=lightgray](8.25,1){.3}
\pscircle(8.25,1){.3}
\psline[linewidth=.5pt,linearc=.2](.45,1)(.25,1)(.25,2.2)(7.6,2.2)(7.6,1)(7.3,1)
\rput[t](2.625,2.1){\small$\varepsilon$}
%
%
%
%
%
\endpspicture\\
 \pspicture*(10,2.75)
%
%
\psline[linearc=.5](9,2.5)(0,2.5)(0,0)(9,0)
\pscircle*[linecolor=lightgray](.75,1){.3}
\pscircle(.75,1){.3}
\psline[linewidth=.5pt,linestyle=dashed,dash=3pt 2pt](.45,1)(1.05,1)
\psline[linewidth=.5pt,linestyle=dashed,dash=3pt 2pt](.75,.7)(.75,1.3)
\psline[linewidth=.5pt](1.05,1)(1.7,1)
\pscircle*[linecolor=lightgray](2,1){.3}
\pscircle(2,1){.3}
\psline[linewidth=.5pt,linestyle=dashed,dash=3pt 2pt](1.7,1)(2.3,1)
\psline[linewidth=.5pt,linestyle=dashed,dash=3pt 2pt](2,.7)(2,1.3)
\psline[linewidth=.5pt](2.3,1)(2.95,1)
\pscircle*[linecolor=lightgray](3.25,1){.3}
\pscircle(3.25,1){.3}
\psline[linewidth=.5pt,linestyle=dashed,dash=3pt 2pt](2.95,1)(3.55,1)
\psline[linewidth=.5pt,linestyle=dashed,dash=3pt 2pt](3.25,.7)(3.25,1.3)
\psline[linewidth=.5pt](3.55,1)(4.2,1)
\pscircle*[linecolor=lightgray](4.5,1){.3}
\pscircle(4.5,1){.3}
\psline[linewidth=.5pt,linestyle=dashed,dash=3pt 2pt](4.2,1)(4.8,1)
\psline[linewidth=.5pt,linestyle=dashed,dash=3pt 2pt](4.5,.7)(4.5,1.3)
%
\psline[linewidth=.5pt,linearc=.2](3.25,1.3)(3.25,1.65)(7.5,1.65)(7.5,1)(7.3,1)
\psline[linewidth=.5pt,linearc=.2](3.25,.7)(3.25,.35)(4.5,.35)(4.5,.7)
\psline[linewidth=.5pt,linearc=.2](4.5,1.3)(4.5,1.45)(5.25,1.45)(5.25,1)(5.45,1)
\rput[t](3.875,1.55){\small$\gamma$}
\pscircle*[linecolor=lightgray](5.75,1){.3}
\pscircle(5.75,1){.3}
\psline[linewidth=.5pt,linestyle=dashed,dash=3pt 2pt](5.45,1)(6.05,1)
\psline[linewidth=.5pt,linestyle=dashed,dash=3pt 2pt](5.75,.7)(5.75,1.3)
\psline[linewidth=.5pt](6.05,1)(6.7,1)
\pscircle*[linecolor=lightgray](7,1){.3}
\pscircle(7,1){.3}
\psline[linewidth=.5pt,linestyle=dashed,dash=3pt 2pt](6.7,1)(7.3,1)
\psline[linewidth=.5pt,linestyle=dashed,dash=3pt 2pt](7,.7)(7,1.3)
\pscircle*[linecolor=lightgray](8.25,1){.3}
\pscircle(8.25,1){.3}
%

%
%
\psline[linewidth=.5pt,linearc=.2](.45,1)(.25,1)(.25,1.8)(5,1.8)(5,1)(4.8,1)
\rput[t](2.625,1.7){\small$\beta$}
%

\psline[linewidth=.5pt,linearc=.2](.75,.7)(.75,.35)(2,.35)(2,.7)
\psline[linewidth=.5pt,linearc=.2](5.75,.7)(5.75,.35)(7,.35)(7,.7)
\psline[linewidth=.5pt,linearc=.2](.75,1.3)(.75,2.2)(7,2.2)(7,1.3)
\psline[linewidth=.5pt,linearc=.2](2,1.3)(2,2)(5.75,2)(5.75,1.3)
\rput[t](6.375,2.1){\small$\delta$}
%
%
%
%
%
%
\endpspicture
\caption {The curves of the lantern relation.}\label{Fig:Lantern}
\end{center}
\end{figure}
Let us consider the curves $\alpha_{1}$, $\alpha_{3}$, $\alpha_{5}$, $\beta$ as defined above and the curves $\gamma$, $\delta$, $\varepsilon$ defined as following:
\[\gamma=\gamma_{\{3,4,5,6\}},\quad \delta=\gamma_{\{1,2,5,6\}},\quad \varepsilon=\gamma_{\{1,2,3,4,5,6\}}.\]
We also define Dehn twists:
$c=T_{\gamma}$, $d=T_{\delta}$, $e=T_{\varepsilon}$.
Observe that the curves $\alpha_1$, $\alpha_3$, $\alpha_5$ and $\varepsilon$ bound a 4-holed sphere (Fig. \ref{Fig:Lantern}) and the twists $a_{1}, a_{3}, a_{5}, b, c, d, e$ satisfy the lantern relation (see \cite[Section 5.1]{F-M}):
$$
a_{1}a_{3}a_{5}e=bcd.
$$
From that we obtain $a_{1}$ as follows:
$$
a_{1}=(ba_{3}^{-1})(ca_{5}^{-1})(de^{-1}).
$$
We will show that each of the factors $(ba_{3}^{-1})$, $(ca_{5}^{-1})$, $(de^{-1})$ is in $G$. By \eqref{eq:uva} and \eqref{eq:uua} we have $b=(u_4v)a_4(u_4v)^{-1}$ and $a_4=(u_3u_4)a_3(u_3u_4)^{-1}$.
Thus 
\[
ba_3^{-1}=(u_4vu_3u_4)a_3(u_4vu_3u_4)^{-1}a_3^{-1}\in G.
\]
We set
\[x=u_2u_3u_4u_5u_1u_2u_3u_4.\]
Observe that $x(\beta)=\gamma$, $x(\alpha_3)=\alpha_5$ and $x(\mu_4)=\mu_6$. Thus \[ca_5^{-1}=x(ba_3^{-1})x^{-1}\in G.\]
We define the crosscap transposition $w$ as follows:
\[w=xvx^{-1}=U_{\mu_{6},\gamma}.\]
We have $w=Y_{\mu_{6},\gamma}T_{\mu_{6},\gamma}$ and it is easy to check that \[w(\gamma_{\{1,2,6,7\}})=\gamma_{\{1,2,3,4,5,7\}}.\] Note that $u_5u_6(\gamma_{\{1,2,6,7\}})=\delta$ and $u_6(\gamma_{\{1,2,3,4,5,7\}})=\varepsilon$,  and finally $u_{6}wu_{5}^{-1}u_{6}^{-1}(\delta)=\varepsilon$. Thus
\begin{align*}
& e=(u_{6}wu_{5}^{-1}u_{6}^{-1})d(u_{6}wu_{5}^{-1}u_{6}^{-1})^{-1}\\
& de^{-1}=d(u_{6}wu_{5}^{-1}u_{6}^{-1})d^{-1}(u_{6}wu_{5}^{-1}u_{6}^{-1})^{-1}.
\end{align*}
Since $u_{6}wu_{5}^{-1}u_{6}^{-1}$ is a product of $u_i$ and $v$, to prove $de^{-1}\in G$ it suffices to show that $du_id^{-1}\in G$ and $dvd^{-1}\in G$ for $1\le i\le 6$. This follows from the fact that  $d$  can be obtained by conjugating $a_3$ by some 
 product of $u_i$ and $v$. Indeed, we have
 $u_4u_5u_3u_4(\delta)=\beta$ and
 \begin{align*}
 d &=(u_4u_5u_3u_4)^{-1}b(u_4u_5u_3u_4)=\\
&=(u_4u_5u_3u_4)^{-1}(u_4vu_3u_4)a_3(u_4vu_3u_4)^{-1}(u_4u_5u_3u_4).\qedhere
 \end{align*}
\end{proof}
We are now in a position to prove our main theorem.
\begin{proof}[Proof of Theorem \ref{T:main1}]
By Proposition \ref{P:au} $\M(N_{g,n})$ is normally generated by $u_1$ and $a_1$. By Lemma \ref{L:a1} $a_1$ can be expressed as a product of conjugates of $u_1$, and hence $\M(N_{g,n})$ is normally generated by $u_1$.
\end{proof}
\begin{theorem}\label{T:main2}
For $g\ge 7$ and $n\in\{0,1\}$ $\M(N_{g,n})$ is generated by the following set of crosscap transpositions
\[\{u_i, v, a_3u_2a_3^{-1}, a_3u_3a_3^{-1}\,\mid\,1\le i\le g-1\}.\]
\end{theorem}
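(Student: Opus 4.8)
The plan is to combine Theorem~\ref{T:main1} (or rather the two results leading to it, Proposition~\ref{P:au} and Lemma~\ref{L:a1}) with an argument that upgrades the \emph{normal} generating set to an honest generating set. Let $G$ denote the subgroup of $\M(N_{g,n})$ generated by the displayed set. Since this set contains the set from Lemma~\ref{L:a1} (that set requires only $u_1,\dots,u_6$, $v$, $a_3u_2a_3^{-1}$, $a_3u_3a_3^{-1}$, all of which appear here because $g\ge 7$ forces $g-1\ge 6$), Lemma~\ref{L:a1} gives $a_1\in G$. By Proposition~\ref{P:au}, $\M(N_{g,n})$ is normally generated by $\{a_1,u_1\}$, so it suffices to show that $G$ is a normal subgroup of $\M(N_{g,n})$, equivalently that $G$ contains \emph{every} conjugate of $a_1$ and of $u_1$. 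Since $u_1\in G$ and $a_1\in G$, it is enough to show that $G$ is normal, and for that it is enough to exhibit a known generating set $\Sigma$ of $\M(N_{g,n})$ with $\Sigma\subseteq G$.

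So the real content is: produce a finite generating set of $\M(N_{g,n})$, of the type appearing in the literature (Dehn twists plus one crosscap slide, or equivalently Dehn twists plus one crosscap transposition, as in \cite{Chill,KGD,Stu_twist,Szep_g4}), and check each of its elements lies in $G$. The natural candidate consists of the twists $a_1,\dots,a_{g-1}$ about the chain curves $\alpha_i$, the twist $b=T_\beta$, and one crosscap transposition such as $u_1$; this is essentially the Chillingworth-type generating set adapted to the crosscap-transposition presentation of \cite{PSz,Szep_g4}. Each $u_i$ is in $G$ by definition. We already have $a_1\in G$. For the remaining $a_i$ we use relation~\eqref{eq:uua}: $a_{i+1}=(u_iu_{i+1})a_i(u_iu_{i+1})^{-1}$, so inductively $a_2,\dots,a_{g-1}\in G$ once $a_1\in G$ and all $u_i\in G$. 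Finally $b\in G$: from \eqref{eq:uva} and \eqref{eq:uua} we get $b=(u_4v)a_4(u_4v)^{-1}$, and $a_4\in G$, $u_4\in G$, $v\in G$. Thus $\Sigma=\{a_1,\dots,a_{g-1},b,u_1\}\subseteq G$, and if $\Sigma$ generates $\M(N_{g,n})$ we are done.

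The step I expect to be the main obstacle is citing (or assembling) a correct finite generating set of $\M(N_{g,n})$ of exactly this shape for \emph{all} $n\ge 0$ — wait, here $n\in\{0,1\}$, which is the easier case and is precisely where clean generating sets by chain twists plus $b$ plus one crosscap slide are available in \cite{Szep_g4} (and in \cite{Stu_Pres} for the presentation). One must make sure the curves $\alpha_1,\dots,\alpha_{g-1},\beta$ and the crosscap slide/transposition chosen in that reference match (up to the obvious homeomorphism, hence up to conjugation that is harmless since $G\ni$ the conjugating elements) the curves used here; the role of the hypothesis $g\ge 7$ in the cited generating-set theorem should be checked, though the sharper constraint $g\ge 7$ really comes from Lemma~\ref{L:a1} and the abelianisation obstruction, not from the generating set itself (those hold for $g\ge 3$ or $g\ge 4$).

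An alternative, slightly more self-contained route avoids quoting a generating set by chain twists: since $\M(N_{g,n})$ is normally generated by $\{a_1,u_1\}$ (Proposition~\ref{P:au}) and both lie in $G$, it suffices to show $G\trianglelefteq\M(N_{g,n})$ directly, i.e.\ that $G$ is invariant under conjugation by a generating set of $\M(N_{g,n})$; but any such generating set is itself built from Dehn twists and a crosscap transposition, so one is led back to the same verification. I would therefore write the proof in the first form: invoke Lemma~\ref{L:a1} for $a_1\in G$, derive $a_i\in G$ and $b\in G$ via \eqref{eq:uua} and \eqref{eq:uva} as above, cite that $\{a_1,\dots,a_{g-1},b,u_1\}$ generates $\M(N_{g,n})$ for $g\ge 7$, $n\le 1$ (referencing \cite{Szep_g4} or \cite{Stu_Pres}), and conclude $G=\M(N_{g,n})$.
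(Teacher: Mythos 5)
Your proof is correct and follows essentially the same route as the paper: get $a_1\in G$ from Lemma~\ref{L:a1}, propagate to all $a_i$ and to $b$ via \eqref{eq:uua} and \eqref{eq:uva}, and close the argument by comparison with a known generating set consisting of Dehn twists and crosscap transpositions. The paper cites the generating set $\{a_1,\dots,a_{g-1},u_1,\dots,u_{g-1},b\}$ from \cite{PSz} rather than the one-crosscap-transposition set $\{a_1,\dots,a_{g-1},b,u_1\}$ you propose (which sidesteps the curve-matching issue you flag), and note that your normality detour is unnecessary: once $\Sigma\subseteq G$ for a generating set $\Sigma$, you have $G=\M(N_{g,n})$ directly.
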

\begin{proof} Let $N=N_{g,n}$, where $g\ge 7$ and $n\in\{0,1\}$. 
It follows from the presentation of $\M(N)$ given in \cite{PSz} that this group is generated by $a_i, u_i$ for $i=1,\dots,g-1$ and $b$.
By \eqref{eq:uva} we have $b=(u_4v)a_4(u_4v)^{-1}$ and by \eqref{eq:uua} every $a_k$ can be obtained by conjugating $a_1$ by some product of $u_i$. Therefore we see that $\M(N)$ is generated by  $a_1$, $v$ and $u_i$ for $i=1,\dots,g-1$. Theorem \ref{T:main2} follows by 
 Lemma \ref{L:a1}.
\end{proof}

\end{document}